\let\today\relax
\def\ps@pprintTitle{%
	\let\@oddhead\@empty
	\let\@evenhead\@empty
	\def\@oddfoot{\footnotesize\itshape
		{Submitted preprint} \hfill\today}%
	\let\@evenfoot\@oddfoot
}
\newtheorem{case}{Case}
\newtheorem{theorem}{Theorem}
\newtheorem{lemma}[theorem]{Lemma}
\newtheorem{example}[theorem]{Example}
\newtheorem{remark}[theorem]{Remark}
\newtheorem{proposition}[theorem]{Proposition}
\newcolumntype{L}{>{\centering\arraybackslash}m{1.7cm}}
\newcolumntype{R}{>{\centering\arraybackslash}m{3cm}}
\DeclarePairedDelimiter\abs{\lvert}{\rvert}
\newcommand{\ubar}[1]{\underaccent{\bar}{#1}}
\DeclareAcronym{mip}{
	short = MIP ,
	long  = Mixed-Integer Programming ,
	sort  = M ,
}
\DeclareAcronym{der}{
	short = DER ,
	long  = Distributed Energy Resources ,
	sort  = D ,
}
\DeclareAcronym{rtbm}{
	short = RTBM ,
	long  = Real-Time Balancing Market ,
	sort  = R ,
}
\DeclareAcronym{ads}{
	short = ADS ,
	long  = Active Demand and Supply ,
	sort  = A ,
}
\DeclareAcronym{brp}{
	short = BRP ,
	long  = Balance Responsible Party ,
	sort  = B ,
}
\DeclareAcronym{tso}{
	short = TSO ,
	long  = Transmission System Operator ,
	sort  = T ,
}
\DeclareAcronym{hp}{
	short = HP ,
	long  = Heat Pump ,
	sort  = H ,
}
\DeclareAcronym{mchp}{
	short = mCHP ,
	long  = Micro Combined Heat and Power  ,
	sort  = m ,
}
\DeclareAcronym{mpec}{
	short = MPEC ,
	long  = Mathematical Programming with Equilibrium Constraints  ,
	sort  = M ,
}
\begin{document}
	
	\begin{frontmatter}
		
		\title{A Real-Time Balancing  Market Optimization with Personalized Prices: From Bilevel to Convex}

		% or include affiliations in footnotes:
		\author[BI]{Koorosh Shomalzadeh\corref{mycorrespondingauthor}}
		\cortext[mycorrespondingauthor]{Corresponding author}
		\ead{k.shomalzadeh@rug.nl}
		
		\author[ENTEG]{Jacquelien M. A. Scherpen}
		\ead{j.m.a.scherpen@rug.nl}
		
		\author[BI]{M. Kanat Camlibel}
		\ead{m.k.camlibel@rug.nl}

		\address[BI]{Jan C. Willems Center for Systems and
			Control, Bernoulli Institute for
			Mathematics, Computer Science and Artificial Intelligence, Faculty of Science and Engineering, University of Groningen, Nijenborgh 9, 9747 AG, Groningen,
			The Netherlands}
		\address[ENTEG]{Jan C. Willems Center for Systems and
			Control, Engineering and Technology Institute Groningen, Faculty of Science and Engineering, University of Groningen, Nijenborgh 4, 9747 AG, Groningen,
			The Netherlands}
		\begin{abstract}
			This paper studies the static economic optimization problem of a system with a single aggregator and multiple prosumers  in a \ac*{rtbm}. 
			The aggregator, as the agent responsible for  portfolio balancing, needs to minimize the cost for imbalance satisfaction in real-time by proposing a set of optimal personalized prices to the prosumers. On the other hand, the prosumers, as  price taker and self-interested agents,  want to maximize their profit by changing their supplies or demands and providing flexibility based on the proposed personalized prices.
			We model this problem as a bilevel optimization problem. 
			We first show that the optimal solution of this bilevel optimization problem can be found by solving an equivalent convex problem. In contrast to the state-of-the-art \ac*{mip}-based approach to solve bilevel problems, this convex equivalent has very low computation time and is  
			appropriate for real-time applications.   
			Next, we compare the optimal solutions of the proposed personalized scheme and a uniform pricing scheme.  
			We prove that, under the  personalized pricing scheme, more prosumers contribute to the  \acs{rtbm} and the aggregator's cost is less.
			Finally, we verify the analytical results of this work by means of numerical case studies and simulations.  
		\end{abstract}
		
		\begin{keyword}
			Real-Time Balancing Market (RTBM) \sep convex optimization \sep bilevel optimization \sep flexibility management \sep  personalized pricing
		\end{keyword}

	\end{frontmatter}

	%\linenumbers
	\section{Introduction}
	In  recent years, the increase in the penetration of  \ac{der}s  at the demand side has  drastically changed  the structure of our power system.
	As a result, the old passive households, which only consumed energy, found a more active role with the help of the demand side generation. The new term \textit{prosumer}  was introduced in the energy community to represent this transition for households \cite{parag2016electricity}.
	\par
	The emergence of prosumers calls for  a new real-time market structure in contrast to the existing day ahead  and  intraday markets. Since output power of many \ac{der}s is volatile due to their intrinsic environmental dependency, planning for supply and demand matching needs to be done as close as possible to real-time to keep the system stable and economically efficient. Therefore, a \acf{rtbm}  \cite{pineda2013using} that incorporates available unused capacity of prosumers' controllable \ac{der}s and flexible loads, which together we denote here as controllable \ac{ads} units, should be developed to address  the supply volatility by incentivizing prosumers.
	\par
	Currently, there is only an ex-post financial settlement procedure in the Netherlands and most of Europe, and  no  actual or physical real-time balancing  occurs \cite{wang2015review}.   
	Communication infrastructure  in the new paradigm of smart grid \cite{conejo2010real} facilitates the participation  of  the prosumers with  controllable \ac{ads} units in an \ac{rtbm}.
	Moreover, to prevent direct interaction of the prosumers with  higher level agents in the market and aggregate them, a market participant, the \textit{aggregator},  has been introduced \cite{gkatzikis2013role}.
	The aggregators  have  different roles in different market structures.
	\par
	The goal of an aggregator in an \ac{rtbm} is to optimize its operational costs for balancing 
	%or increase her profit  
	by incentivizing  the prosumers to utilize their unused assets.  
	There are many approaches which  an aggregator can employ to steer its associated prosumers to an optimal  operation  point \cite{vardakas2014survey}. One of the most popular approaches is to consider the aggregator as a leader, who can  anticipate the reaction of the prosumers, proposes some prices to the following prosumers such that their reactions would be optimal for the aggregator.
	This price incentive oriented setup falls into the category of \textit{bilevel optimization problems} \cite{colson2007overview}    and \textit{Stackelberg games} \cite{von2010market}, where the lower level problems and the upper level problem  are the problems related to the prosumers and the aggregator, respectively.
	\par 
	The bilevel and Stackelberg game modeling of the aggregator and prosumers' interactions have been studied extensively in the literature \cite{zugno2013bilevel,meng2016bilevel,tushar2016price,yang2018model,yang2017framework,tushar2012economics,hobbs2000strategic}. 
	Two different pricing schemes have been proposed to incentivize prosumers in the aforementioned studies.  The \textit{uniform pricing} scheme is an incentivization scheme where the aggregator proposes the same price to all of the prosumers \cite{zugno2013bilevel,meng2016bilevel}. In the other pricing, i.e., the \textit{personalized pricing} scheme, the aggregator proposes a unique price to each prosumer in order to reach its goal \cite{tushar2014prioritizing,yang2018model,yang2017framework}. While these two pricing schemes have been considered in different works interchangeably and it is argued that the personalized pricing scheme has some benefits over uniform pricing scheme, there exists no research which provides rigorous mathematical proofs on the differences between these two schemes. 
	\par
	Moreover, the state-of-the-art approach to solve these types of bilevel optimization problems is to solve them as  \ac{mip}s \cite{zugno2013bilevel,li2018participation,wang2017strategic}. However,  
	implementing the mentioned setup in  real-time requires very fast computations. The time intervals  for a real-time balancing market can often be as low as $5$ minutes \cite{vlachos2013demand}. Therefore, the solution for each interval has to be computed and executed within seconds or even less.  
	While papers  like \cite{ghamkhari2016strategic} have studied  the computational efficiency of the bilevel optimization correspond to generating firms strategic offering by introducing a convex relaxation, to the best of our knowledge,   no study addressed the computation time for the prosumers/aggregator setup with personalized prices {for a high number of prosumers. It should be noted that, although the algorithms in \cite{tushar2012economics} and \cite{tushar2014prioritizing} are distributed, their efficiency are not guaranteed for large problems and real-time applications.} 
	\par 
	In contrast to the above works, here we stick to a simple model for the aggregator and prosumers interaction with personalized pricing scheme to analyse   the corresponding bilevel optimization problem in a fundamental and tractable mathematical way. Although our model is simple, we  keep the essence of these  market models and most of the results in this paper can be generalized to more complicated and realistic models.
	\par
	\textit{Contributions:}
	We present a bilevel optimization problem to model the interactions between  self-interested  aggregator and prosumers in an \ac{rtbm}. A personalized pricing scheme by the aggregator is proposed  to incentivize the prosumers to participate in this market. 
	Bilevel problems, in general, are non-convex \cite{luo1996mathematical}.
	We first prove that the global optimal solution of this bilevel optimization problem can be found by solving a convex equivalent problem. 
	This convex equivalent formulation has two main advantages. On the one hand, it guarantees global optimality. On the other hand, a  convex formulation is attractive in real-time applications with high number of prosumers since the other approaches to solve bilevel optimization problems (e.g., \ac{mip}-based approach) are not computationally efficient.
	Afterwards,  we compare the optimal solution of the proposed model with personalized prices to a uniform pricing scheme. We prove that the personalized pricing scheme leads to a less cost for the aggregator and under this pricing scheme more prosumers contribute to the balancing market. 
	\textcolor{black}{Preliminary results of this work are partially presented in the extended abstract \cite{shomalzadeh2020solution}. In contrast to the abstract, this paper considers a more general model for the prosumer and provides theoretical proofs for the results. Also, in this paper we compare uniform and personalized pricing schemes in different aspects.}
	\par
	The paper is organized as follows. Section~\ref{sec:pf} explains the prosumers/aggregator interaction model in a real-time balancing market and introduces the bilevel problem. In Section~\ref{sec:ce}, we show that the bilevel optimization problem is equivalent to a certain convex problem.  
	The analytical comparison of the optimal solution of the proposed personalized pricing scheme and a uniform pricing  scheme is presented in Section~\ref{sec:vs}.
	The efficiency of the proposed method is illustrated by means of simulations in Section~\ref{sec:sim}.  Section~\ref{sec:con} concludes the paper.
	\textcolor{black}{The proofs of some theoretical results are presented in \ref{app}.}
	\section{Problem formulation} \label{sec:pf}
	In this section, we formulate the static bilevel economic optimization problem of  an aggregator and its portfolio for participation in an \ac{rtbm}. 
	{While this paper is devoted to investigate a single time-step, the proposed scheme can also be applied for dynamic cases with multiple time-steps. The general structure of this market is as follows.} 
	Each aggregator has a set of prosumers under  contract and each prosumer is on a contract with only one aggregator.  There are many types of aggregators in an electricity market. In this paper, we consider a commercial aggregator which also acts as a  \ac{brp} \cite{ding2013real}.  Therefore, the aggregator here is also responsible for balancing its portfolio. To do so, the aggregator receives a real-time price from the \ac{tso}, who usually has the highest role in the market hierarchy,  and incentivizes the prosumers with personalized prices to supply or consume more or less based on that.
	The change in each prosumer electrical energy  supply or demand  in a time interval is referred as \textit{flexibility}.
	Next, we explain the problem setting and market structure in detail.
	
	Prosumers are equipped with various kinds of  \ac{ads} units. They consist of two prominent categories, namely controllable and uncontrollable units.  \ac{mchp} units and \ac{hp} units are examples of controllable active supply and demand units of electricity, respectively.  Output generation of units such as solar cells and wind turbines is dependent on environmental conditions. Thus these  are uncontrollable supply units. \textcolor{black}{ Throughout this paper, we assume that each prosumer has a modular \ac{mchp} and  \ac{hp}  as its controllable \ac{ads} units and it might have a solar panel or  wind turbine as an uncontrollable one.} Each prosumer heat demand is also assumed  to be flexible by considering a loss of comfort factor, that is, it is willing to consume more or less heat if its loss of comfort is compensated by the aggregator. Since heat is an output for both \ac{mchp} and \ac{hp}, prosumers are able to alter their controllable \ac{ads} units output level to participate in the balancing market. 
	\par
	Due to the uncertain nature and volatility  of both the  uncontrollable \ac{der}s   and the prosumers demand, there could be a mismatch between the pre-planned  supply and demand schedules in the real-time. To balance this mismatch  and to participate in the \ac{rtbm}, the aggregator  incentivizes the prosumers with personalized  prices \cite{yang2017decision} in a centralized way to consume or  supply more energy using their controllable \ac{ads} units.  
	Before providing a precise mathematical formulation, we elaborate on some technical notions.
	\par
	The aggregator is in \textit{up-regulation} if its prosumers' demand is lower than its  supply. Similarly, the aggregator is in \textit{down-regulation} if the demand is higher than the supply for its  prosumers. 
	Likewise, the \ac{tso} is in \textit{up-regulation} if the total system demand is lower than the total system generation. Otherwise, it is in \textit{down-regulation}. 
	Based on these definitions, we distinguish the following four cases:
	\begin{case} \label{cs:1}
		The aggregator and the \ac{tso} both are in up-regulation:  \normalfont{The aggregator needs to pay the \ac{tso} to take care of its excess supply or it can incentivize the prosumers with \ac{mchp} to generate less and the prosumers with \ac{hp} to consume more.}
	\end{case}
	\begin{case} \label{cs:2}
		The aggregator is in up-regulation and the \ac{tso} is in down-regulation:  \normalfont{The \ac{tso} pays the aggregator for its excess supply.}
	\end{case}
	\begin{case} \label{cs:3}
		The aggregator and the \ac{tso} both are in down-regulation: \normalfont{The aggregator needs to pay the \ac{tso} to provide supply or it can incentivize the prosumers with \ac{mchp} to generate more and the prosumers with \ac{hp} to consume less.}
	\end{case}
	\begin{case} \label{cs:4}
		The aggregator is in down-regulation and the \ac{tso} is in up-regulation: \normalfont{The \ac{tso} pays the aggregator to consume more.}
	\end{case}
	In both Case~\ref{cs:2} and  Case~\ref{cs:4} the solution for the optimal strategy  of the aggregator is trivial: sell the requested flexibility to the \ac{tso}. However, in Case~\ref{cs:1} and Case~\ref{cs:3} the aggregator needs to find a trade-off between the possible options for the optimal strategy. In the following subsection, we focus on modeling  Case~\ref{cs:1} and Case~\ref{cs:3} as a bilevel optimization problem.
	\subsection{The prosumers/aggregator model}
	We consider both the aggregator and the prosumer as self-interest agents. The aggregator tries to minimize its cost to settle the imbalance and the prosumer's goal is to maximize its revenue and minimize its cost and discomfort by altering its demand or supply given the personalized price proposed by the aggregator. 
	\par
	\textcolor{black}{
	We consider one aggregator and $n$ prosumers each has one \ac{hp} and \ac{mchp}.
	For all $i\in N=\{1,2,\dots,n\}$, we denote the proposed personalized price by the aggregator to the $i$th prosumer by $x_i$ and the prosumer $i$'s \ac{hp} and \ac{mchp} optimal flexibility response by $y_{i1}$ and $y_{i2}$, respectively. 
	Accordingly, we reserve the subscripts $i1$ and $i2$ to denote the parameters of the $i$th prosumer's \ac{hp} and \ac{mchp}, respectively.
	To model both Case~\ref{cs:1} and Case~\ref{cs:3}, we employ the following optimization problem for each prosumer:
	\begin{subequations}  \label{prob00}
		\begin{alignat}{2} \label{prob001}
			& \underset{y_{i1},y_{i2}}{\mathrm{max}} \quad && x_i(y_{i1}+y_{i2})-(f_i(y_{i1},y_{i2})+b_{i1}y_{i1}+b_{i2}y_{i2})\\ \label{prob002}
			&\mathrm{subject \ to} \quad && 0 \le y_{i1} \le m_{i1},\\
			&&& 0 \le y_{i2} \le m_{i2},
		\end{alignat} 
	\end{subequations}
	where $m_{i1},m_{i2}>0$ are the maximum available flexibility, $b_{i1}$ and $b_{i2}$ are the prices of providing flexibility and 
	{$f_i(y_{i1},y_{i2})$ is the discomfort function for prosumer $i$. In this work, we consider $f_i(y_{i1},y_{i2})=\frac{1}{2}(\sqrt{a_{i1}}y_{i1}-\sqrt{a_{i2}}y_{i2})^2$ where the  parameters  $\sqrt{a_{i1}},\sqrt{a_{i2}} $ translate the flexibility provision to heat increase/decrease  \cite{deng2014residential}.
	Note that in both Case~\ref{cs:1} and Case~\ref{cs:3}, the \ac{hp} and \ac{mchp}'s heat outputs due to flexibility provision change in the opposite direction. For instance, in Case~\ref{cs:1}, the aggregator rewards the prosumer to increase its HP consumption and decrease its mCHP generation. This leads to more heat generation for the HP and less for the mCHP. Therefore, we have employed minus sign in the discomfort function definition.}
	Next, we elaborate further on the model and parameters.}
	\par
	\textcolor{black}{
	In \eqref{prob001}, the first term corresponds to the received payment by the prosumer $i$ from the aggregator. The second term models the discomfort of the  prosumer $i$ for providing flexibility $y_{i1}$ and $y_{i2}$. Finally, the last two terms capture the amount  prosumer $i$ can save or the cost it should pay with respect to the intraday market plannings for providing flexibility $y_{i1}$ and $y_{i2}$.}
	\par
	\textcolor{black}{
		The parameter $b_{i1}$ for the  prosumer's \ac{hp} in both the aggregator up-regulation (Case~\ref{cs:1}) and down-regulation (Case~\ref{cs:3})  is  as follows:
		\begin{equation*}
			b_{i1}=\begin{cases*}
				\pi_e & \textrm{if aggregator  in up-regulation,} \\
				-\pi_e & \textrm{if  aggregator  in down-regulation,}
			\end{cases*}
		\end{equation*}
		Likewise, for the prosumer's  \ac{mchp} this parameter is defined as follows:
		\begin{equation*}
			b_{i2}=\begin{cases*}
				-c_i \pi_g  & \textrm{if  aggregator  in up-regulation,} \\
				c_i \pi_g & \textrm{if  aggregator  in down-regulation,}
			\end{cases*}
		\end{equation*}
		where $c_i$  is dependent on the \ac{mchp} technology of the prosumer $i$ and is given by 
		\begin{equation*}
			c_i=\frac{\textrm{nominal input power }}{\textrm{nominal electricity output power}}\cdot
		\end{equation*}
		and  $\pi_e \ge 0$ and $\pi_g \ge 0$ are fixed electricity and gas prices charged by the electricity and gas suppliers, respectively.}
	\par
	\textcolor{black}{
	Further, we define the maximum available flexibility $m_{i1}$ and $m_{i2}$ as follows.  
	For prosumer $i$, let $P_{i1}$ and $P_{i2}$ denote  the input electrical power to an \ac{hp} device and  the output electrical power of an \ac{mchp} device, respectively. Also, let $P_{i1}^{\mathrm{max}}$ and $P_{i2}^{\mathrm{max}}$  denote the maximum electrical power for prosumer $i$'s \ac{ads} devices.
	Then, the maximum available flexibility of the prosumer $i$'s \ac{hp}  is given by
	\begin{equation*}
		m_{i1}=\begin{cases*}
			(P_{i1}^{\mathrm{max}}-P_{i1})\Delta t & \text{if  aggregator  in  up-regulation,} \\
			P_{i1}\Delta t & \text{if  aggregator  in  down-regulation,} \\
		\end{cases*}
	\end{equation*}
	where $\Delta t$ is the duration of each time step for the \ac{rtbm} and assumed to be equal to $300$ seconds in this paper. Similarly, we define $m_{i2}$ for a prosumer's \ac{mchp} as follows:
	\begin{equation*}
		m_{i2}=\begin{cases*}
			P_{i2}\Delta t & \text{if  aggregator  in  up-regulation,} \\
			(P_{i2}^{\mathrm{max}}-P_{i2})\Delta t & \text{if  aggregator  in  down-regulation.} \\
		\end{cases*}
	\end{equation*} }
	{
		\begin{figure}[t!] 
			\centering
			\begin{tikzpicture}[scale=0.70]
				\tikzset{vertex/.style = {shape=circle,draw, minimum size=1cm}}
				\tikzset{edge/.style = {->,> =  triangle 45}}
				\tikzset{dedge/.style = {dashed,->,> =  triangle 45}}
				% vertices
				\node [shape=circle,draw, minimum size=2cm] (TSO) at (4,8) {TSO};
				\node [shape=rectangle,draw, minimum width=3cm,minimum height=2cm] (Agg) at (4,4) {Aggregator};
				\node[vertex] (pro1) at  (0,0) { Pro. $1$};
				\node[vertex] (pro2) at  (8,0) { Pro. $n$};
				\node [shape=circle, minimum size=1cm] (pron) at (4,0) {\huge \dots};
				%edges
				\draw[dedge] (Agg) to[bend right] node[label=above:$x_1$]{} (pro1);
				\draw[dedge] (Agg) to[bend left] node[label=above:$x_{n}$]{} (pro2);
				\draw[edge] (pro1) to[bend right] node[label=below:$y_1$]{} (Agg);
				\draw[edge] (pro2) to[bend left] node[label=below:$y_{n}$]{} (Agg);
				\draw[dedge] (TSO) to[bend right] node[label=left:$p$]{} (Agg);
				\draw[edge] (TSO) to[bend left] node[label=right:$f-\sum_{i}y_i$]{} (Agg);
				\draw[thick, solid] (7,7.5) rectangle (11,9.5);
				\draw[solid] (7.5,8) -- (8,8) node[anchor= west] {\footnotesize Flexibility flow };
				\draw[dashed] (7.5,9) -- (8,9) node[anchor= west] {\footnotesize Price signal};
			\end{tikzpicture}
			\caption{A general overview of interactions for the aggregator, the prosumers and the \ac{tso} in  the \ac{rtbm}.  }
			\label{fig:interactions}
	\end{figure}
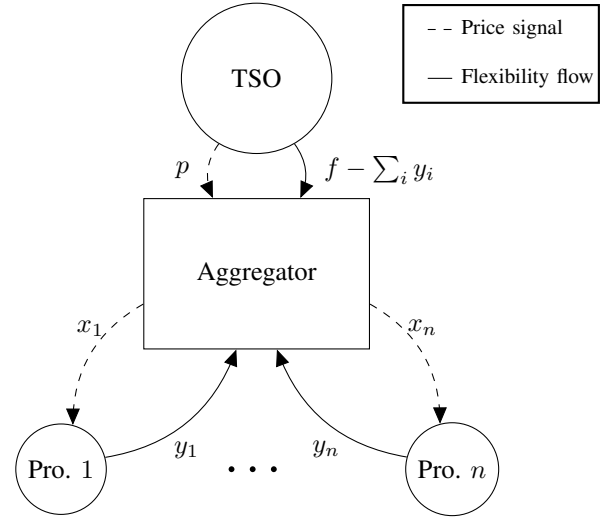}
	\par \textcolor{black}{
	As the agent responsible for supply and demand  balancing in the  \ac{rtbm}, the aggregator has two options to accomplish its goal, namely, to incentivize the prosumers for flexibility provision with the associated cost of $x_iy_i=x_i(y_{i1}+y_{i2})$ or  to buy flexibility from the \ac{tso} with the price $p > 0$.  The aggregator's problem is to find the best strategy given these two options. }
	\par \textcolor{black}{
	Considering the above model, bounds on the proposed price $x_i$ and also the prosumers' optimality conditions,
		we obtain the bilevel optimization problem \eqref{prob0} which has the problem \eqref{prob00} as a constraint for each prosumer:
		\begin{subequations} \label{prob0}
			\begin{alignat}{2} 
				&  \underset{x,y}{\mathrm{min}} \quad  &&\ \phi(x,y)=\sum_{i \in N} x_iy_i+p(f-\sum_{i \in N}y_i) \\
				& \mathrm{subject \ to}\quad  && \ubar{\rho} \le x_i \le \bar{\rho}, \qquad \forall i\in N,\\ \label{0in}
				&&&y_i=y_{i1}+y_{i2} , \qquad \forall i\in N, \\
				&&& \sum_{i \in N}y_i \le f,\\
				&&&  \begin{cases} \label{prob01}
				\underset{y_{i1},y_{i2}}{\mathrm{max}} \quad & x_i(y_{i1}+y_{i2})-(f_i(y_{i1},y_{i2})+b_{i1}y_{i1}+b_{i2}y_{i2})\\ 
				\mathrm{subject \ to} \quad & 0 \le y_{i1} \le m_{i1},\\
				&0 \le y_{i2} \le m_{i2},
				\end{cases}\quad  \forall i\in N,
			\end{alignat}
		\end{subequations}
		where $x$ and $y$ are vectors with components $x_i$ and $y_i$, respectively. 
		Also, $f > 0$ denotes the mismatch between supply and demand in both up- and down-regulation. 
		If the flexibility provided by the prosumers is  $\sum_{i \in N}y_i$ then, the aggregator needs to trade $(f-\sum_{i \in N}y_i)$  with the \ac{tso}.   
		Figure~\ref{fig:interactions} shows these interactions.
		To guarantee a minimum profit for each prosumer and to prevent a high aggregator's payoff, we impose the nonnegative lower and upper bounds $\ubar{\rho}$ and $\bar{\rho}$ on the aggregator's proposed price $x_i$.
		We consider an ex-ante pricing scheme, that is, the \ac{tso} informs the aggregator about the price $p$ prior to the start of each 5-minute interval. }
	\par
	These types of bilevel problems and markets have a strong connection with Stackelberg games \cite{von2010market}, where a leader announces a policy to its followers and then the followers, who are unaware of the outside world,  react by their best response strategy. In other words, the leader has the advantage of anticipating the followers reactions. 
	{A full investigation of such a market in a game-theoretic framework can be found in \cite{tushar2012economics}.}
	\par
	In the setup we consider in this paper, the aggregator's goal is to satisfy its internal imbalance in real-time. However, in other possible settings beyond the scope of this paper, helping the \ac{tso} to satisfy the total system imbalance can also be a goal for the aggregator. Therefore, in that setting the problem formulation for Case~\ref{cs:1} and Case~\ref{cs:3} is given by \eqref{prob0} without considering  \eqref{0in}. In this situation, if $\sum_{i \in N}y_i -f\le 0$, the aggregator pays $p(f-\sum_{i\in N}y_i)$ to the \ac{tso} and if $\sum_{i\in N}y_i - f >0$, then the aggregator receives $p(f-\sum_{i \in N}y_i)$ from the \ac{tso} for providing  flexibility.  
	\subsection{The bilevel market optimization problem with personalized prices and its solution}
	The model above for the aggregator and the prosumers interactions is very close to the bilevel  electricity market models in \cite{zugno2013bilevel,yang2018model,yang2017framework}, where different market technicalities have been considered. Furthermore, we restrict our model to a static case. Despite these differences, our model captures the basic properties of a bilevel market.
	\par
	The aforementioned studies have used two pricing schemes, i.e., the uniform pricing scheme and the personalized pricing scheme interchangeably. However, none of these studies has investigated the optimal solution of the optimization problems with these two pricing scheme in a rigorous mathematical way. In the following two sections, we first show that under the personalized pricing scheme the optimal solution of the bilevel optimization problem can be found by solving an equivalent convex optimization problem. Then, we elaborate on the optimal solution of the bilevel problem with the personalized pricing in contrast to the optimal solution of the  same problem with uniform pricing scheme.
	\section{On the solution of the bilevel electricity market problem with the personalized pricing scheme} \label{sec:ce}
	In general, bilevel optimization problems are very difficult to solve. They have been extensively studied in the framework of \ac{mpec}. We refer to \cite{luo1996mathematical} for a full investigation of \ac{mpec}s.  The  simplest case of a bilevel optimization problem is when both the upper and lower level problems are linear. Even in this simplest case, \cite{hansen1992new} has shown that the problem is strongly NP-hard. Some classes of bilevel optimization problems  can be reformulated  as \ac{mip} problems and solved by commercial software packages \cite{fortuny1981representation}. This approach has been extensively used to solve electricity market optimization problems as a state-of-the-art approach \cite{li2018participation}, \cite{wang2017strategic}.
	\par
	An aggregator can have up to several thousands of prosumers under its contract. To implement an \ac{rtbm} with  5-minute time intervals, the optimal solution of the problem \eqref{prob0} should be found as fast as possible. The increase in the number of the optimization variables, as a result of the growth in the number of the prosumers,  leads to an unacceptable computation time in real-time applications for combinatorial  optimization problems such as \ac{mip} problems.  
	\par
	{In this section, we elaborate on a convex equivalent of the the problem \eqref{prob0}. It should be emphasized that we are not seeking for an algorithm to solve the problem \eqref{prob0}. The contribution here is to introduce a convex reformulation for the bilevel problem \eqref{prob0}. Having a convex equivalent enables us to solve the problem using any algorithm  available in the commercial software packages and find the global optimal solution.
		\textcolor{black}{In what follows, we first show that the bilevel optimization problem \eqref{prob0} is equivalent to a single level optimization problem.
			Then, we prove that under sufficient conditions only one of the \ac{ads} devices of each prosumer becomes active in the \ac{rtbm}. Consequently,we consider the problem of one device per prosumer  and show that the solution of the new problem can be found using a convex equivalent problem.}
		\textcolor{black}{
		\subsection{From bilevel to single-level}
		Given $x_i$ the optimization problem \eqref{prob01}  is a convex optimization problem. Therefore, one can rewrite  \eqref{prob01} as its necessary and sufficient KKT conditions
			\begin{equation}\label{KKT00}
				\begin{aligned}
					&a_{i1}y_{i1} -\sqrt{a_{i1}a_{i2}}y_{i2}+b_{i1}-x_i-\mu_{i1}+\nu_{i1}=0,\\ 
					&-\sqrt{a_{i1}a_{i2}}y_{i1} +a_{i2}y_{i2}+b_{i2}-x_i-\mu_{i2}+\nu_{i2}=0,\\ 
					&0 \le y_{i1}\perp \mu_{i1} \ge 0, \quad 0 \le m_{i1}-y_{i1} \perp \nu_{i1} \ge 0, \\ 
					&0 \le y_{i2}\perp \mu_{i2} \ge 0, \quad 0 \le m_{i2}-y_{i2} \perp \nu_{i2} \ge 0. 
				\end{aligned}
			\end{equation}
			Here $\mu_{i1}$ and $\nu_{i1}$ are the dual variables for the lower bound and upper bound on $y_{i1}$, respectively. Likewise, $\mu_{i2}$ and $\nu_{i2}$ are the dual variables for the lower bound and upper bound on $y_{i2}$, respectively.  Having  \eqref{KKT00}, let us rewrite the bilevel optimization problem \eqref{prob0} as the following single-level optimization problem: 
			\begin{subequations} \label{prob1new}
				\begin{alignat}{2} 
					&  \underset{x,y,\mu,\nu}{\mathrm{min}} \quad  &&\ \phi(x,y)=\sum_{i \in N} x_iy_i+p(f-\sum_{i \in N}y_i) \\
					& \mathrm{subject \ to}\quad  && \ubar{\rho} \le x_i \le \bar{\rho}, \qquad \forall i\in N,\\ 
					&&&y_i=y_{i1}+y_{i2} , \qquad \forall i\in N, \\
					&&& \sum_{i \in N}y_i \le f,\\ \label{prob1newkkt}
					&&&  \begin{cases} 
						&a_{i1}y_{i1} -\sqrt{a_{i1}a_{i2}}y_{i2}+b_{i1}-x_i-\mu_{i1}+\nu_{i1}=0,\\ 
						&-\sqrt{a_{i1}a_{i2}}y_{i1} +a_{i2}y_{i2}+b_{i2}-x_i-\mu_{i2}+\nu_{i2}=0,\\ 
						&0 \le y_{i1}\perp \mu_{i1} \ge 0, \quad 0 \le m_{i1}-y_{i1} \perp \nu_{i1} \ge 0, \\ 
						&0 \le y_{i2}\perp \mu_{i2} \ge 0, \quad 0 \le m_{i2}-y_{i2} \perp \nu_{i2} \ge 0,
					\end{cases}\quad  \forall i\in N.
				\end{alignat}
			\end{subequations}
		Since the KKT conditions are necessary and sufficient for \eqref{prob01}, the next results immediately follows.
			\begin{lemma} \label{lem:evident}
			The optimization problem \eqref{prob0} and \eqref{prob1new} are equivalent.
		\end{lemma}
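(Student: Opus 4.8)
The plan is to unwind the meaning of ``equivalent'': two optimization problems are equivalent when they share the same feasible set and the same objective function, so that in particular they attain the same optimal value at the same set of minimizers. Since the objective $\phi(x,y)=\sum_{i} x_iy_i+p(f-\sum_{i}y_i)$ is literally the same in \eqref{prob0} and \eqref{prob1}, and the constraints $x_i\ge 0$ and $\sum_i y_i\le f$ also appear verbatim in both, it suffices to prove that, for a fixed $x$ with $x_i\ge 0$, the constraint ``$y_i$ is an optimal solution of the lower-level problem \eqref{prob01}'' is satisfied by exactly the same $y_i$ as the explicit assignment \eqref{prob1d}.

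First I would fix $i\in N$ and $x_i\ge 0$ and record that the lower-level objective $g_i(y_i):=x_iy_i-\tfrac12 a_iy_i^2-b_iy_i$ is strictly concave, because $a_i>0$; hence $g_i$ attains its maximum over the nonempty compact interval $[0,m_i]$ at a \emph{unique} point, so that the constraint ``$y_i$ solves \eqref{prob01}'' determines $y_i$ uniquely as a function of $x_i$ and may legitimately be written as an equality. Next I would compute $g_i'(y_i)=x_i-a_iy_i-b_i$, whose unique zero is the unconstrained maximizer $\hat y_i=(x_i-b_i)/a_i$, and then distinguish the three cases $\hat y_i<0$, $\hat y_i\in[0,m_i]$, and $\hat y_i>m_i$. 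Because $g_i$ is increasing to the left of $\hat y_i$ and decreasing to its right, the constrained maximizer is the projection of $\hat y_i$ onto $[0,m_i]$; translating the three cases into conditions on $x_i$ yields precisely $y_i=0$ for $x_i<b_i$, $y_i=(x_i-b_i)/a_i$ for $b_i\le x_i\le a_im_i+b_i$, and $y_i=m_i$ for $x_i>a_im_i+b_i$, which is \eqref{prob1d} (equivalently \eqref{sol01}). I would also note that this map is continuous, so the two ways of writing the boundary values $x_i=b_i$ and $x_i=a_im_i+b_i$ are mutually consistent, and that $\arg\max$ being a singleton is exactly why replacing the optimization constraint by the equality loses nothing.

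Putting these together, a pair $(x,y)$ is feasible for \eqref{prob0} if and only if it is feasible for \eqref{prob1}, and on this common feasible set the two objectives coincide; hence the problems have the same optimal value and the same set of global minimizers, which is the asserted equivalence. I do not expect a genuine obstacle here: the only point needing a little care is the uniqueness of the lower-level solution, so that ``$y_i\in\arg\max$'' can be rewritten as an equality, and this is precisely what strict concavity (from $a_i>0$) delivers; everything else is the elementary maximization of a concave quadratic over an interval already carried out in the derivation of \eqref{sol01}, which is why the lemma follows immediately.
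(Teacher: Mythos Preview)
Your proposal is correct and is essentially the same approach the paper takes: the paper simply asserts that the lemma follows immediately from the analytical solution \eqref{sol01} of the lower-level problem, and your write-up spells out precisely this---uniqueness of the lower-level maximizer by strict concavity ($a_i>0$), the projection-onto-$[0,m_i]$ formula, and hence equality of feasible sets and objectives. There is nothing to add.
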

		\begin{proof}
			See \ref{app}.
		\end{proof}
		\noindent
		In the next subsections, we focus on the optimization problem \eqref{prob1new} as the equivalence of \eqref{prob0}.
		}
		\textcolor{black}{
			\subsection{\ac{ads} device activation}
			In the previous section, we have  built a model based on the fact that  each prosumer can have both \ac{hp} and \ac{mchp}.  However, modeling both types of \ac{ads} devices might not always be necessary as formalized in the following lemma.
			\begin{lemma}\label{lem:act}
				Consider the optimization problem \eqref{prob1new}. Suppose that $\frac{\sqrt{a_{i2}}b_{i1}+\sqrt{a_{i1}}b_{i2}}{\sqrt{a_{i1}}+\sqrt{a_{i2}}}> \bar{\rho}$.  Then, the  following  statements hold.
				\begin{enumerate}[label=\Roman*)]
					\item $y_{i1}^*y_{i2}^*=0$. \label{lem:act1}
					\item If $b_{i1} \le 0 $ and $ b_{i2} \ge 0$, $y_{i2}^*=0$. That is the $i$th prosumer's mCHP does not provide flexibility in down-regulation.  
					\item If $b_{i1} \ge0 $ and $ b_{i2} \le 0$, $y_{i1}^*=0$. That is the $i$th prosumer's HP does not provide flexibility in up-regulation.  
				\end{enumerate}
			\end{lemma}
		\begin{proof}
			See \ref{app}.
		\end{proof}		
		Motivated by the lemma above, hereafter, we assume that each prosumer has either an \ac{hp} or \ac{mchp}.  Therefore, \eqref{prob1newkkt} can be rewritten as
		\begin{equation}\label{KKT00new} 
			\begin{aligned} 
				a_iy_i+b_i-x_i&-\mu_i+\nu_i=0,\\
				0 \le y_i &\perp \mu_i \ge 0, \\
				0 \le m_i-y_i &\perp \nu_i \ge 0.
			\end{aligned}
		\end{equation}
	Note that to  ease  the notation, we have  dropped $1$ and $2$ in the subscripts related to each prosumer since it only has one \ac{ads} device.
		}
		Solving the parametric linear complementarity
			problem \eqref{KKT00new} analytically leads to the following piece-wise linear map from $x_i$ to $(y_i,\mu_i,\nu_i)$:
			\begin{equation} \label{sol01}
				(y_i,\mu_i,\nu_i)=\begin{dcases}
					(0,b_i-x_i,0) \quad &  x_i < b_i,  \\
					(\frac{x_i-b_i}{a_i},0,0) \quad & b_i \le x_i \le a_im_i+b_i,  \\
					(m_i,0,x_i-a_im_i-b_i)  \quad & x_i > a_im_i+b_i.
				\end{dcases} 
		\end{equation} 
		This allows us to   rewrite the optimization problem \eqref{prob1new} as the following piece-wise quadratic optimization problem:
		\begin{subequations} \label{prob1}
			\begin{alignat}{2}
				&  \underset{x,y,\mu,\nu}{\mathrm{min}} \quad  && \phi(x,y) = \sum_{i \in N} x_iy_i+p(f-\sum_{i \in N}y_i)\label{prob1a} \\ \label{prob1b}
				& \mathrm{subject \ to}\quad  && \ubar{\rho} \le x_i \le \bar{\rho}, \qquad \forall i\in N\\ \label{prob1c}
				&&& \sum_{i \in N}y_i \le f,\\ \label{prob1d}
				&&& \mkern-90mu (y_i,\mu_i,\nu_i)=\begin{dcases}
					(0,b_i-x_i,0) \quad &  x_i < b_i,  \\
					(\frac{x_i-b_i}{a_i},0,0) \quad & b_i \le x_i \le a_im_i+b_i,  \\
					(m_i,0,x_i-a_im_i-b_i)  \quad & x_i > a_im_i+b_i,
				\end{dcases}  \quad \forall i \in N,
			\end{alignat}
		\end{subequations}
		{\subsection{On the convexity of single-level optimization problem}}
		Here, we elaborate on the solution of the optimization problem \eqref{prob1}. It turns out  under some specific conditions, the optimization problem \eqref{prob1} has  trivial optimal solution for some $i\in N$. The following lemma investigates these specific conditions.
		\begin{lemma}\label{lem:trivialsol}
			Consider the optimization problem \eqref{prob1}. Then, the following statements hold.
			\begin{enumerate}[label=\Roman*)]
				\item Suppose $b_i > \bar{\rho}$ for some $i\in N$. Then,   $x_i^* \in [ \ubar{\rho},  \bar{\rho}]$, $y_i^*=0$, $\mu_i^*=b_i-x_i^*$ and $\nu_i^*=0$.
				\item Suppose $ \ubar{\rho}> a_im_i+b_i$ for some $i\in N$. Then, $x_i^* = \ubar{\rho}$, $y_i^*=m_i$, $\mu_i^*=0$ and $\nu_i^*=x_i^*-a_im_i-b_i$.
			\end{enumerate}
		\end{lemma}
		\begin{proof}
			See \ref{app}.
		\end{proof}
		The above lemma  shows that if $b_i > \bar{\rho}$ or $ \ubar{\rho}> a_im_i+b_i$ for some $i \in N$, we can find the optimal solutions without solving any optimization problem. Then, the following question arises immediately: What if none of the conditions in Lemma~\ref{lem:trivialsol} holds?
		This question is answered by the following example and the results after that.
		\begin{example}
			Suppose a two-dimensional case of the problem \eqref{prob1}  where $a_1=a_2=1$, $b_1=b_2=2$, $m_1=m_2=6$, $\ubar{\rho}=0$, $\bar{\rho}=10$, $p=10$, $f=30$. It is obvious, based on  Lemma~\ref{lem:trivialsol} and the parameters,  that  this problem has no trivial solutions. Figure \ref{nonconvex} depicts  objective function of the problem \eqref{prob1}  with these parameters. As can be seen, the objective function is non-convex and consists of several convex quadratic functions.
			Note that its minimum coincides with the minimum of the convex quadratic problem obtained from \eqref{prob1} by taking $y_i=\frac{x_i-b_i}{a_i}$ and $\mu_i=\nu_i=0$ with $b_i\le x_i \le a_im_i+b_i$  for $i\in\{1,2\}$.
		\end{example}
		\begin{figure*}[t]
			\centering
			\subfloat[The   non-convex objective function.]{%
				\includegraphics[width=0.45\textwidth]{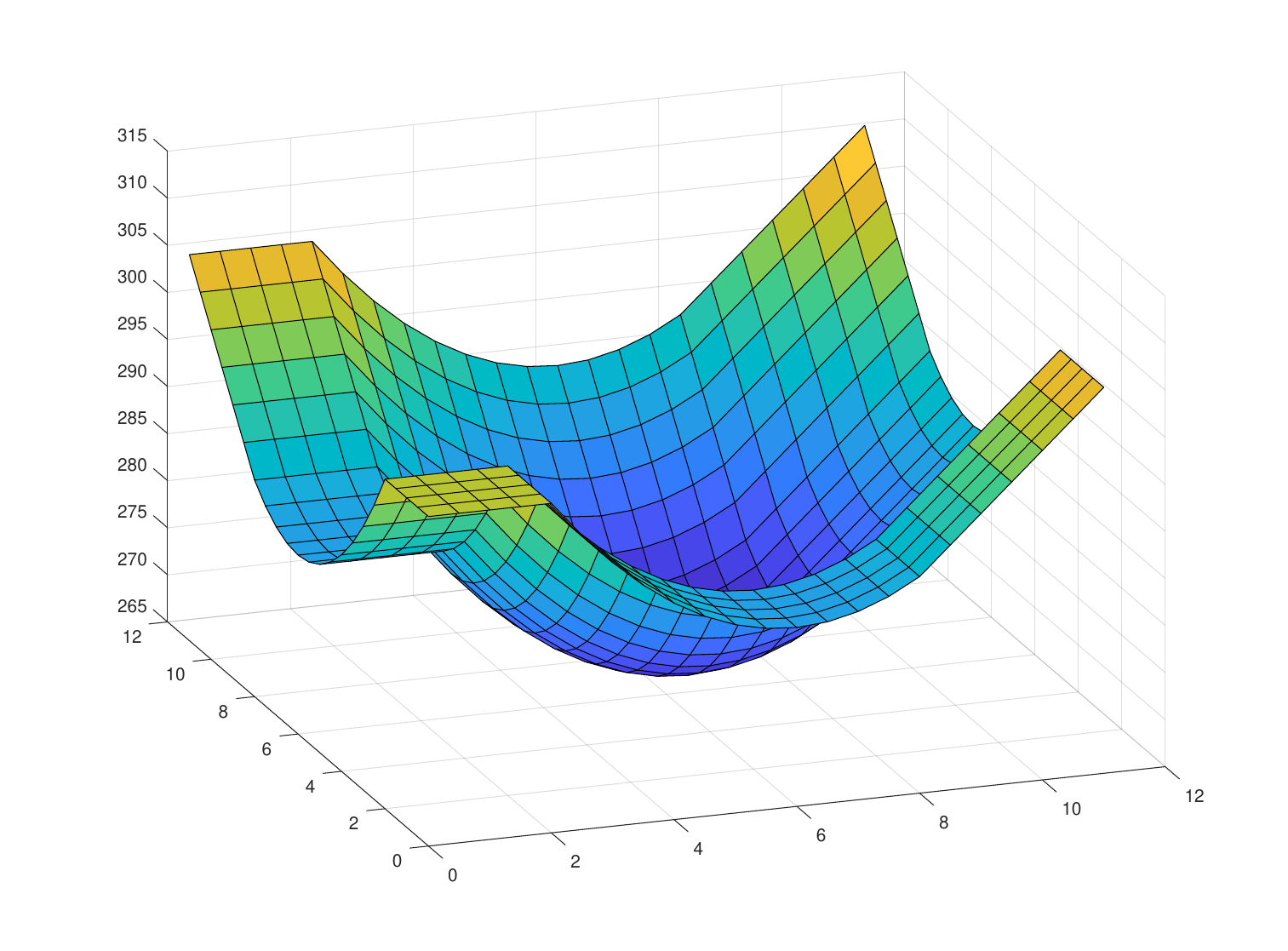}\label{nonconvex}.	} 
			\subfloat[The optimal  point (depicted by red circle) in  the restricted feasibility region.]{%
				\includegraphics[width=0.45\textwidth]{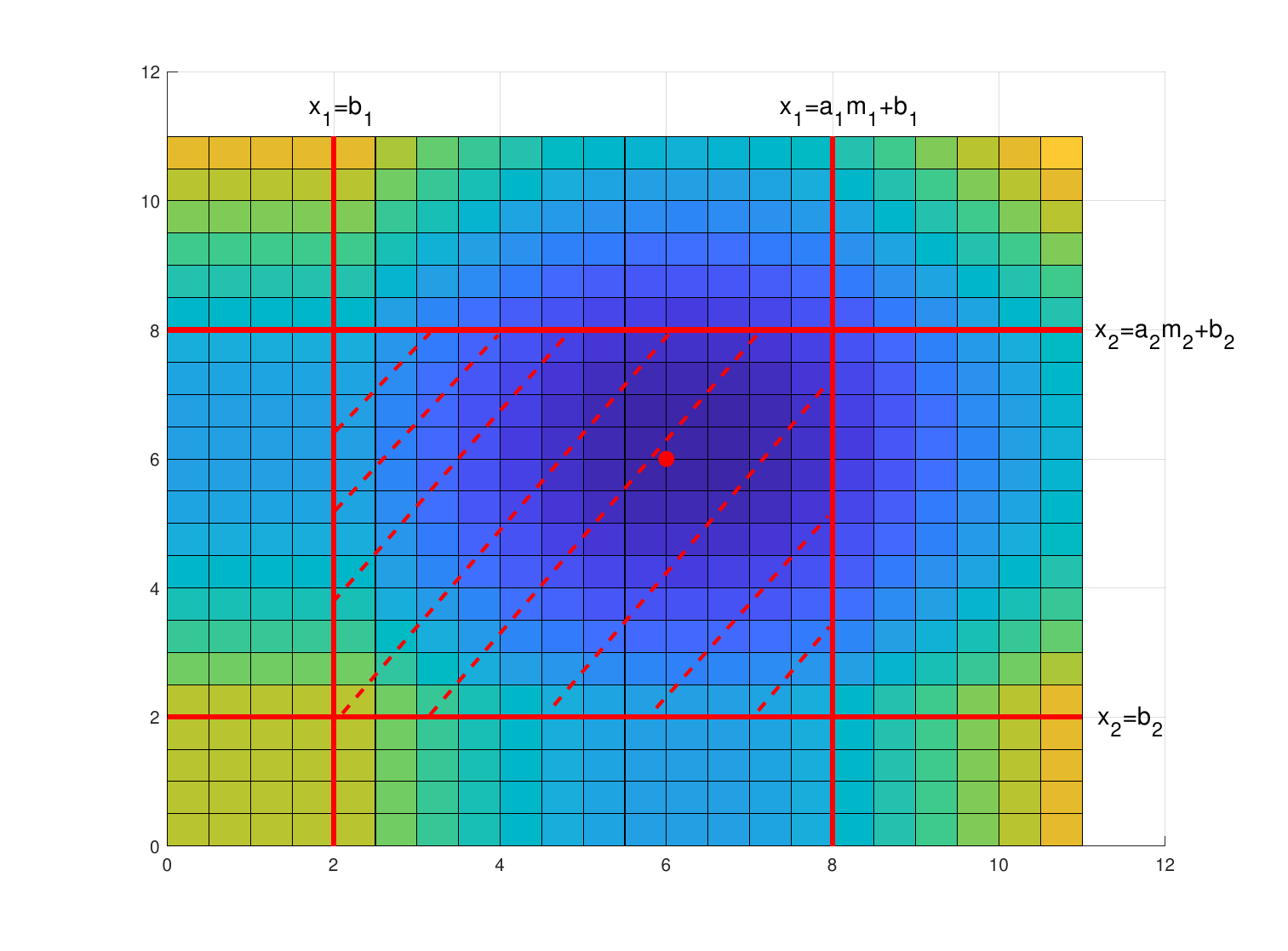}\label{nonconvex2}.	}
			\caption{Two-dimensional case example.}
		\end{figure*}  
		Motivated by this example, we consider the following convex quadratic problem by taking $\mu_i=\nu_i=0$ for all $i \in N$:
		\begin{subequations} \label{prob2}
			\begin{alignat}{2}
				&  \underset{x,y}{\mathrm{min}} \quad  && \phi(x,y) = \sum_{i \in N} {x}_i{y}_i+p(f-\sum_{i \in N}{y}_i) \\ \label{prob2b}
				& \mathrm{subject \ to}\quad  && \ubar{\rho} \le {x}_i\le \bar{\rho}, \qquad \forall i \in N\\ \label{prob2c}
				&&&\sum_{i \in N}{y}_i \le f,\\ \label{prob2d}
				&&&  {y}_i=\frac{{x}_i-b_i}{a_i}, \quad  b_i \le {x}_i \le a_im_i+b_i, \quad \forall i \in N. 
			\end{alignat}
		\end{subequations}
		It appears that a global minimum of the nonconvex problem \eqref{prob1}  can be found by solving the convex problem \eqref{prob2}.
		\begin{lemma} \label{lem:convex}
			Assume $\ubar{\rho} -a_im_i \le b_i \le \bar{\rho}$ for all $i\in N$. Then, there exists an optimal solution $x^*, y^*, \mu^*$ and $\nu^*$ for \eqref{prob1}  such that $\mu^*=\nu^*=0$ and the same $x^*$ and $y^*$ are also the minimizers of the convex quadratic problem \eqref{prob2}.
		\end{lemma}
		\begin{proof}
			See \ref{app}.
		\end{proof}
		\begin{remark}
			The piece-wise linear constraint \eqref{prob1d}  makes the problem \eqref{prob1} a piece-wise quadratic optimization problem with $3^n$ quadratic problems where $n$ is the number of prosumers. Lemma  \ref{lem:convex} proves that under the assumption $\ubar{\rho} -a_im_i \le b_i \le \bar{\rho}$ for all $i\in N$, one of these $3^n$ problems  always attains the global optimum. 
		\end{remark}
		Now, we are in a position to state the main results of this paper.
		\begin{theorem}\label{main}
			Consider the optimization problem \eqref{prob1}. Let $\alpha=\{i \in N \mid b_i > \bar{\rho}\}$, $\beta=\{i \in N \mid a_im_i+b_i < \ubar{\rho}\}$ and $\theta=\{i \in N \mid \ubar{\rho} -a_im_i \le b_i \le \bar{\rho}\}$. Then,  
			\begin{gather}
				x_i^* \in [\ubar{\rho}, \bar{\rho}], \ y_i^*=0,  \quad \forall \ i \in \alpha, \\
				x_i^*= \ubar{\rho} , \ y_i^*=m_i, \quad \forall \ i  \in \beta, 
			\end{gather}
			and $x_i^*, y_i^*$ for all $i \in \theta$ are the minimizers of the following convex problem:
			\begin{subequations} \label{conopt}
				\begin{alignat}{2}
					& \min_{\substack{x_i, y_i\\ \forall \ i \in \theta}} \quad  && \phi(x,y) = \sum_{i \in \theta} {x}_i{y}_i+\sum_{i \in \beta} \ubar{\rho}m_i+p(f-\sum_{i \in \theta}{y}_i-\sum_{i \in \beta}{m}_i) \\ 
					& \mathrm{subject \ to}\quad  && \ubar{\rho} \le {x}_i\le \bar{\rho}, \qquad \forall i \in \theta,\\ 
					&&&\sum_{i \in \theta}{y}_i \le f - \sum_{i \in \beta}{m}_i ,\\ 
					&&&  {y}_i=\frac{{x}_i-b_i}{a_i}, \quad  b_i \le {x}_i \le a_im_i+b_i, \quad \forall i \in \theta.
				\end{alignat}
			\end{subequations}
		\end{theorem}	
		\begin{proof}
			The proof for the optimal solutions of the subsets $\alpha$ and $\beta$ immediately follows from Lemma~\ref{lem:trivialsol}. Eliminating this trivial solutions, the proof for the minimizers of indices in $\theta$ follows from Lemma~\ref{lem:convex}.
		\end{proof}
		Another advantage of using the convex optimization problem \eqref{conopt} over the bilevel one stems from privacy considerations. Indeed, the aggregator needs to have all information about the prosumers to the bilevel problem in a centralized way. However, the prosumers may not be willing to share their information with third parties due to privacy concerns. Since Theorem~\ref{main} allows a distibuted solution to find the optimum (see \cite{Bertsekas/99}), such privacy concerns are not an obstacle for solving the problem \eqref{prob2} or \eqref{conopt}.
		\section{Personalized pricing vs. Uniform pricing}\label{sec:vs}
		In the setup we have considered so far in this work, a personalized pricing scheme is implemented. This means that the aggregator proposes different prices to each prosumer to minimize its cost. However, in another scenario, one can consider a uniform pricing  scheme where the aggregator proposes the same price to all the prosumers \cite{zugno2013bilevel}. These two pricing schemes are very well-known in microeconomics literrature \cite{phillips2021pricing}. In what follows, we investigate the advantages of personalized pricing over uniform pricing  in the defined balancing market. For this purpose, we first (re)write the problems for these schemes. The optimization problem PP corresponds to the personalized pricing  scheme:
		\begin{subequations} \label{probpp}
			\begin{alignat}{2} 
				\mathrm{PP:}\qquad&  \underset{x,y,\mu,\nu}{\mathrm{min}} \quad  &&\ \phi(x,y)=\sum_{i\in N} (x_i-p) y_i+pf \\
				& \mathrm{subject \ to}\quad  && 0 \le x \le \bar{\rho} \qquad \forall i\in N, \label{probpp1}\\ 
				&&& \sum_{i\in N}y_i \le f,\\
				&&& y_i=\frac{x_i-b_i+\mu_i-\nu_i}{a_i},\qquad \forall i\in N,\\
				&&&	0 \le y_i \perp \mu_i \ge 0,\qquad \forall i\in N, \\
				&&& 0 \le m_i-y_i \perp \nu_i \ge 0,\qquad \forall i\in N.
			\end{alignat}
		\end{subequations}
		Note that this is a reformulation of the problem \eqref{prob1}. For simplicity, we consider the parameter $\ubar{\rho}$ equal to zero, although all the following analyses can be verified for arbitrary $\ubar{\rho}$. 
		Similar to the problem above, we define the problem UP for the uniform pricing scheme. Here all the proposed prices to the prosumers are equal and it is denoted by the scalar decision variable $x$:
		\begin{subequations} \label{probup}
			\begin{alignat}{2} 
				\mathrm{UP:}\qquad&  \underset{x,y,\mu,\nu}{\mathrm{min}} \quad  &&\ \phi(x,y)=\sum_{i \in N}(x-p) y_i+pf \\
				& \mathrm{subject \ to}\quad  &&  0 \le x \le \bar{\rho}, \label{probup1}\\ 
				&&& \sum_{i\in N}y_i \le f,\\
				&&& y_i=\frac{x-b_i+\mu_i-\nu_i}{a_i},\qquad \forall i\in N,\\
				&&&	0 \le y_i \perp \mu_i \ge 0,\qquad \forall i\in N, \\
				&&& 0 \le m_i-y_i \perp \nu_i \ge 0,\qquad \forall i\in N.
			\end{alignat}
		\end{subequations}
		\par
		One of the main benefits of the personalized pricing scheme is that it leads to a lower or equal balancing cost. The next proposition states this advantage.
		\begin{proposition}\label{pro:cost}
			The aggregator's optimal cost in the personalized pricing scheme is less than or equal than its optimal cost in the uniform pricing scheme, i.e.,
			\[\phi_{\mathrm{PP}}^* \le \phi_{\mathrm{UP}}^*.\] 
		\end{proposition}
		\begin{proof}
			One can rewrite the problem \eqref{probup} by replacing $x$ by $x_i$ and add an extra constraint as
			\[x_1=x_2=\dots=x_n.\]
			Therefore, the feasible set of the problem UP is a subset of the feasible set of the problem PP. This concludes that $\phi_{\mathrm{PP}}^* \le \phi_{\mathrm{UP}}^*$.
		\end{proof}
		Having a less balancing cost for the aggregator is not the only superior aspect of the personalized pricing scheme. 
		The next proposition shows that under this pricing scheme more prosumers contribute to the balancing market.
		\begin{proposition} \label{pro:num}
			Let $n_{\mathrm{PP}}(N)$ and $n_{\mathrm{UP}}(N)$ be the number of prosumers who participate in the personalized  and uniform pricing scheme, respectively. Then, $n_{\mathrm{UP}}(N) \le n_{\mathrm{PP}}(N)  $.
		\end{proposition}
		To prove the proposition above, we need some auxiliary results. The following lemmas concerning the optimization problems PP and UP play an essential role in the proof of Proposition~\ref{pro:num}. 
		\begin{lemma}\label{lem:bneg} 
			Consider the optimization problems PP and UP. 
			Then the following two statements hold.
			\begin{enumerate}[label=\Roman*)]
				\item Let $b_i <0$ for some $i \in N$. Then, the optimal solution $y_i^*$ is positive for both problems.
				\item Let $b_i > \bar{\rho}$ for some $i \in N$. Then,  the optimal solution $y_i^*$ is zero for both problems.
			\end{enumerate}	
		\end{lemma}
		\begin{proof}
			See \ref{app}.
		\end{proof}
		\begin{lemma}\label{lem:pp}
			Consider the optimization problem PP. Suppose that $0 \le b_i \le  \bar{\rho}$ for all $i \in N$. If $p >b_i$, then $y_i^* > 0$.
		\end{lemma}
		\begin{proof}
			See \ref{app}.
		\end{proof}
		Lemma~\ref{lem:pp} provides a \textit{sufficient} condition for contribution of each prosumer in the personalized pricing scheme,
		whereas the next one provides a \textit{necessary} condition for contribution of each prosumer in the uniform pricing scheme.
		\begin{lemma}\label{lem:up}
			Consider the optimization problem UP. Suppose that $0 \le b_i \le  \bar{\rho}$ for all $i\in N$. Also, suppose the sets  $\gamma=\{i\in N \mid y_i^*>0\}$ and $\bar{\gamma}=\{i\in N \mid  y_i^*=0\}$ are given. Then, 
			$p > b_i$ for all  $i \in \gamma$.
		\end{lemma}
		\begin{proof}
			See \ref{app}.
		\end{proof}
		\textcolor{black}{
			\begin{remark} \label{remarkgammah}
				Note that in Lemma~\ref{lem:up}, $(p-b_i)$ is sign-indefinite for $i \in \bar{\gamma}$. Therefore, we can argue that there exists $\hat \gamma$ such that $N \supseteq \hat \gamma \supseteq \gamma$ and $p>b_i$ for all $i\in \hat{\gamma}$.
		\end{remark}}
		Now, we are in a position to prove Proposition~\ref{pro:num}.
		\textcolor{black}{
			\begin{proof}[Proof of Proposition~\ref{pro:num}]
				Define the set, $\alpha=\{i \in N \mid b_i <0\}$, $\beta=\{i \in N \mid 0 \le b_i \le  \bar{\rho}\}$ and $\theta=\{i \in N \mid b_i >\bar{\rho}\}$. Due to Lemma~\ref{lem:bneg}, $n_\mathrm{PP}(\alpha)=n_\mathrm{UP}(\alpha)=\lvert \alpha \rvert$ and $n_\mathrm{PP}(\theta)=n_\mathrm{UP}(\theta)=0$. Now, suppose  that $n_\mathrm{UP}(\beta)$ is given. Then, based on Lemma~\ref{lem:up}, $p > b_i$ holds for all $i \in \hat \gamma$ where $\hat \gamma$ is defined in Remark~\ref{remarkgammah}. As a result, due to Lemma~\ref{lem:pp}, $n_\mathrm{PP}(\beta) \ge n_\mathrm{UP}(\beta)$. Consequently, we have $n_\mathrm{PP}(N) \ge n_\mathrm{UP}(N)$. 
		\end{proof}}
		\textcolor{black}{The profit of a single prosumer in the personalized pricing scheme might be higher or lower than its profit in the uniform pricing scheme. Nonetheless,  Proposition~\ref{pro:num} states that the chance of participation of a prosumer and having revenue in the balancing market is higher in the personalized scheme.}
		\section{Simulations} \label{sec:sim}
		In this section, first
		we evaluate the performance of our convex equivalent problem for the \ac{rtbm} in terms of  computation time and  optimality. We use the state-of-the-art \ac{mip}-based approach in \cite{fortuny1981representation} as a benchmark for this evaluation. 
		Next, we compare the aggregator's cost and prosumers' contribution under two  schemes: personalized and uniform pricing.
		\par
		For  simulation purposes, we consider one type of \ac{hp} and two types of \ac{mchp} technologies for the prosumers. We assume that  half of the prosumers have \ac{hp} and the other half are equipped with \ac{mchp}. We assign to each prosumer a specific technology of \ac{hp} or \ac{mchp}, randomly. Tables \ref{table:hp} and \ref{table:mchp} show the data regarding these types and also their corresponding $\abs{b_i}$ parameters.
		The supplier gas and electricity prices are based on data from \cite{retailerprice} 
		for the  Netherlands and  equal to $0.0861 \ \text{\euro}/\mathrm{kWh}$ and $0.1707 \ \text{\euro}/\mathrm{kWh}$, respectively. The  price $p$ for both up- and down-regulation  is set to $0.7 \ \text{\euro}/\mathrm{kWh}$ based on the settlement price data of TenneT from \cite{settlementprice} 
		for a period where the \ac{tso} is under high stress.   It should be noted that the \ac{tso} informs the aggregators about this price ex-ante. Also, we assume that $\bar{\rho}=p=0.7$ and $\ubar{\rho}=0$.
		\par 
		All optimization problems are implemented in MATLAB r2018b and solved by the Gurobi Optimizer  \cite{gurobi}. 
		The simulations were run on four Intel Xeon 2.6 GHz cores and 1024 GB internal memory of the Peregrine high performance computing cluster of the University of Groningen.   
		\begin{table*}[t] 
			\begin{center}
				\caption{The parameters for different \ac{hp} technologies.}
				\label{table:hp}
				\begin{tabular}{|c|c|c|}
					\hline
					\ac{hp} type & Nominal electricity input power & $\abs{b_i}$  \\ \hline
					$1 $ & $1.1 \ kW$ &  $0.1707 \ \text{\euro}/kWh$
					\\ \hline
				\end{tabular}
			\end{center}
		\end{table*}
		\begin{table*}[t] 
			\begin{center}
				\caption{The parameters for different \ac{mchp} technologies.}
				\label{table:mchp}
				\resizebox{\columnwidth}{!}{%
					\begin{tabular}{|c|c|c|c|}
						\hline
						\ac{mchp} type & Nominal input power & Nominal electricity output power & $\abs{b_i}$  \\ \hline
						$1$ & $8 \ kW$ & $1 \ kW$ & $0.6888 \ \text{\euro}/{kWh}$
						\\ \hline
						$2$ & $4.7 \ kW$ & $0.8 \ kW$ &  $0.5088 \ \text{\euro}/{kWh}$
						\\ \hline
				\end{tabular}}
			\end{center}
		\end{table*}
		\subsection{Computation time and optimality comparison}
		Here, we first define the \ac{mip} formulation of the problem \eqref{prob0}. This formulation is used as a benchmark to evaluate the computational efficiency of the convex equivalent of the bilevel problem. 
		By introducing  dual variables $\lambda_{1i}, \lambda_{2i}$,   auxiliary binary variables $z_i, w_i$   and  a sufficiently large constant $M$, the problem~\eqref{prob0} can be turned to an \ac{mip} problem  as
		\begin{subequations} \nonumber
			\begin{alignat}{3} 
				&  \underset{w,y,z,\lambda_{1},\lambda_{2}}{\mathrm{min}} \quad  && \sum_{i} (a_iy_i^2+(b_i-p)y_i+m_i\lambda_{2i})+pf \\
				& \mathrm{subject \ to}\quad  &&  \sum_{i}y_i \le f, \\
				&&&  \begin{cases}
					x_i=a_iy_i+b_i-\lambda_{1i}+\lambda_{2i} \ge 0, \\
					x_i=a_iy_i+b_i-\lambda_{1i}+\lambda_{2i} \le \bar{\rho}, \\
					0 \le y_i \le Mz_i, \\
					0 \le m_i-y_i \le Mw_i,\\
					0 \le \lambda_{1i} \le M(1-z_i),\\
					0 \le \lambda_{2i} \le M(1-w_i),
				\end{cases}
				\quad  \forall i\in N.
			\end{alignat}
		\end{subequations}
		Details of this approach can be found in \cite{fortuny1981representation}.  
		\par 
		The \ac{mip}  solvers  use complicated heuristic methods to find the optimal solution. Moreover, the computation time for computing  an optimal solution is highly related to specific parameters of the problem.      To find a rough estimate of the optimization run time, we implement a set of 1000 Monte Carlo simulations with uniformly generated random parameters $a_i$, $m_i$ and $f$  for the optimization problem. This is done for different numbers of prosumers. Table \ref{table:results} summarizes the run time results for these Monte Carlo scenarios. The last  column of this table shows the number of scenarios (out of 1000 Monte Carlo scenarios)  where the \ac{mip} problem leads to an infeasible solution or an optimal solution with higher cost than the convex problem.
		\par
		The computation time for the convex optimization problem grows approximately linear with respect to the number of prosumers. This can be seen from the average run time in Table~\ref{table:results} for the convex formulation. If we consider $30000$ as the typical number of prosumers for an aggregator, then the average and the maximum run time are acceptable for a real-time application with $5$-minute time interval.    However, this is not the case for an \ac{mip} formulation. Figure~\ref{fig:runtime} and Table~\ref{table:results} show that the average and maximum computation time of \ac{mip} is not suitable for a real-time market since the computational time grows approximately exponentially.   
		Moreover,  there are some cases that the \ac{mip} formulation with high number of optimization variables does not converge to the global optimal or even to feasible solution.  
		{This is shown on the last column of Table~\ref{table:results}. For instance, for $10$-prosumer case, both the \ac{mip} and convex formulation have the same optimal solution in all $1000$ random scenarios. Nevertheless, in $30000$-prosumer case, the \ac{mip} formulation converges to a higher minimum cost or an infeasible solution with respect to the convex formulation in $40$ out of $1000$ random scenarios of the simulations. It is clear that in the rest $960$ scenarios both the formulations have the same optimal solution.} 
		\begin{table*}[t]
			\begin{center}
				\caption{Simulation run time and optimality comparison.}
				\label{table:results}
				\resizebox{\columnwidth}{!}{
					\begin{tabular}{|c|c|c|c|c|c|}
						\hline
						\multirow{ 2}{*}{Number of Prosumers} & \multicolumn{2}{c|}{Convex formulation run time} & \multicolumn{2}{c|}{\ac{mip} formulation run time} &{Number of scenarios with infeasible} \\ \cline{2-5}
						&Average (sec)    & Maximum (sec)  & Average  (sec)   & Maximum (sec) & or  non-optimal solution for \ac{mip}  \\ \hline
						10&0.0006&0.0010&0.0016&0.0039& 0 \\\hline
						100&0.0012&0.0017&0.0032&0.0058& 0 \\\hline
						1000&0.0038&0.0076&0.0128&0.0371& 1\\\hline
						10000&0.0344&0.0484&0.5548&9.0352& 11\\\hline
						20000&0.0772&0.1231&3.9498&59.1761& 27\\\hline
						30000&0.1161&0.1834&11.1190&161.7937& 40\\\hline
				\end{tabular}}
			\end{center}
		\end{table*}
		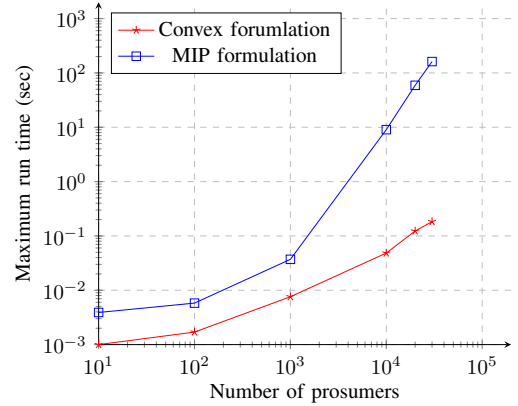
\begin{figure}[t]
			\centering  
			\begin{tikzpicture}[scale=0.8]
				\begin{loglogaxis}[
					axis lines = left,
					xlabel={Number of prosumers},
					ylabel={Maximum run time (sec)},
					xmin=0, xmax=200000,
					ymin=0, ymax=2000,
					xtick={1,10,100,1000,10000,100000},
					ytick={0.001,0.01,0.1,1,10,100,1000},
					ymajorgrids=true,
					xmajorgrids=true,
					grid style=dashed,
					legend pos= north west,
					]
					
					\addplot[
					color=red,
					mark=star,
					]
					coordinates {
						(10,0.0010)(100,0.0017)(1000,0.0076)(10000,0.0484)(20000,0.1231)(30000,0.1834)
					};
					\addplot[
					color=blue,
					mark=square,
					]
					coordinates {
						(10,0.0039)(100,0.0058)(1000,0.0371)(10000,9.0352)(20000,59.1761)(30000,161.7937)
					};
					\legend{Convex formulation,\ac{mip} formulation}
				\end{loglogaxis}
			\end{tikzpicture}
			\caption{Maximum run  time: Convex vs. \ac{mip} formulation.}
			\label{fig:runtime}
		\end{figure}
		\subsection{Pricing schemes comparison}
		\begin{table*}[t] 
			\begin{center}
				\caption{The prosumers' parameters for pricing scheme comparison.}
				\label{table:pro}
				\resizebox{.7\columnwidth}{!}
				{%
					\begin{tabular}{|c|c|c|c|c|}
						\hline
						Pro. number & \ac{mchp} type & $a_i \ (\text{\euro}/kWh^2)$ & $b_i \ (\text{\euro}/{kWh}) $  & $m_i  \ (kWh) $ \\ \hline
						$1$ & Type $1$ & $2$  & $0.6888  $& $ 0.08  $
						\\ \hline
						$2$ & Type $1$ & $5$ & $0.6888 $& $ 0.05$
						\\ \hline
						$3$ & Type $2$ & $10$ & $0.5088 $& $ 0.02$
						\\ \hline
						$4$ & Type $2$ & $5 $ & $0.5088  $& $ 0.01 $
						\\ \hline
						$5$ & Type $2$ & $20 $ & $0.5088  $& $ 0.025  $
						\\ \hline
				\end{tabular}}
			\end{center}
		\end{table*}
		This subsection is devoted to show the validity of   Proposition~\ref{pro:cost}~and~\ref{pro:num}.
		We consider a case where the aggregator and TSO are in down regulation. The total number of prosumers is assumed to be $5$ and all are equipped with  \ac{mchp}s.
		The full details  of prosumers' parameters are presented in Table~\ref{table:pro}.
		The requested flexibility $f$ is $0.05 \ \mathrm{kWh}$.  The results for both pricing schemes are demonstrated in Table~\ref{table:pricingresult}.
		\par
		The optimal results in Table~\ref{table:pricingresult} shows that all prosumers  contribute  to the balancing market under the personalized pricing scheme. However, in the uniform pricing scheme, only the prosumer number $3$, $4$ and $5$ provide flexibility. Furthermore, the aggregator's optimal cost in the personalized pricing scheme is less than its optimal cost in the uniform pricing scheme. Indeed, this is inline with what is claimed in Section~\ref{sec:vs}.
		\begin{table*}[t]
			\begin{center}
				\caption{Optimal price and flexibility: Personalized pricing vs. Uniform pricing}
				\label{table:pricingresult}
				\resizebox{\columnwidth}{!}{
					\begin{tabular}{|c|c|c|c|c|}
						\hline
						\multirow{ 2}{*}{Prosumer number} & \multicolumn{2}{c|}{Personalized pricing scheme} & \multicolumn{2}{c|}{Uniform pricing scheme}  \\ \cline{2-5}
						&$x_i^* \ (\text{\euro}/kWh)$ &$y_i^* \ (kWh)$&$x_i^* \ (\text{\euro}/kWh)$ & $y_i^* \ (kWh)$  \\ \hline
						$1$&$0.6944$&$0.0028$&$0.5711$&$0$\\ \hline
						$2$&$0.6944$&$0.0011$&$0.5711$&$0$ \\ \hline
						$3$&$0.6044$&$0.0096$&$0.5711$&$0.0062$\\ \hline
						$4$&$0.5588$&$0.0010$&$0.5711$&$0.0100$\\ \hline
						$5$&$0.6044$&$0.0048$&$0.5711$&$0.0031$ \\ \hline
						Agg. cost ({\euro}) & \multicolumn{2}{c|}{$0.0322$} & \multicolumn{2}{c|}{$0.0335$}  \\ \hline
				\end{tabular}}
			\end{center}
		\end{table*}
		\section{Conclusions}\label{sec:con}
		In this paper, we have developed  a market with   a \ac{tso},  an aggregator and prosumers to address real-time balancing. We have modeled the corresponding economic optimization problem of a self-interested aggregator and prosumers as a bilevel optimization problem under a personalized pricing  scheme. Generally,  bilevel optimization problems are non-convex. We have shown that it suffices to solve a specific convex optimization problem to find the global optimum of the original bilevel optimization problem. In contrast to existing approaches  (e.g., \ac{mip}), the convex equivalent of the bilevel optimization problem has very low computation time and is therefore preferable in real-time.
		Low computation time and global optimality  are not the only advantages of having a convex equivalent for the bilevel optimization.   Centralized  aggregator control over the whole community of prosumers can be a difficult task, especially when the number of prosumers is very high. However, having a convex formulation for the balancing problem opens up new horizons in  decentralized and distributed control and optimization.
		\par
		Also, we have compared the optimal solutions for two pricing scheme, i.e., personalized and uniform pricing scheme.  We have shown, in a rigorous mathematical way, that under the personalized pricing scheme more prosumers contribute to the balancing market and the aggregator's optimal cost is less.
		\section*{Acknowledgment}
		This research was funded by the NWO (The Netherlands Organisation for Scientific Research) Energy System Integration project "Hierarchical and distributed optimal control of integrated energy systems" [647.002.002].

		\bibliography{mybib}
		\appendix
		\section{Proofs of  lemmas} \label{app}
		\begin{proof}[Proof of Lemma~\ref{lem:evident}]
			The proof is evident from the fact that the optimization problem \eqref{prob01} is a convex optimization problem in $y_{i1}$ and $y_{i2}$ for any given $x_i$ and the KKT conditions are necessary and sufficient for this problem.
		\end{proof}
		}{
		\begin{proof}[Proof of Lemma~\ref{lem:act}]
			Let $x_i^*, y_{i1}^*, y_{i2}^*,  \mu_{i1}^*, \mu_{i2}^*,  \nu_{i1}^*$ and $\nu_{i2}^*$ be the optimal solution of the problem \eqref{prob1new}.
			\begin{enumerate}[label=\Roman*:]
				\item  Suppose, on the contrary, that $y_{i1}^*y_{i2}^*\neq0$. Therefore, $\mu_{i1}^*= \mu_{i2}^*=0$ and we have 
				\begin{gather} \label{act1}
					a_{i1}y_{i1}^* -\sqrt{a_{i1}a_{i2}}y^*_{i2}+b_{i1}-x_i^*+\nu_{i1}^*=0,\\  \label{act2}
					-\sqrt{a_{i1}a_{i2}}y_{i1}^* +a_{i2}y_{i2}^*+b_{i2}-x_i^*+\nu_{i2}^*=0.
				\end{gather}
				We multiply \eqref{act1} by $\sqrt{a_{i2}}$ and \eqref{act2} by $\sqrt{a_{i1}}$. By adding these two terms, we get 
				\begin{equation*}
					x_i^* \ge \frac{\sqrt{a_{i2}}b_{i1}+\sqrt{a_{i1}}b_{i2}}{\sqrt{a_{i1}}+\sqrt{a_{i2}}},
				\end{equation*}
				which is a contradiction since $x_i^* \le \bar\rho$. Therefore, either $y_{i1}^*$ or $y_{i2}^*$ is zero.
				\item Since $b_{i2}\ge b_{i1}$, we have $b_{i2} \ge \frac{\sqrt{a_{i2}}b_{i1}+\sqrt{a_{i1}}b_{i2}}{\sqrt{a_{i1}}+\sqrt{a_{i2}}} > \bar{\rho}$. Suppose, on the contrary, that $y_{i2}^*>0$. Then, based on  item~I, $y_{i1}^*=0$. 
				This point should satisfy the constraints of \eqref{prob1new}, specifically, 
				\begin{gather*}
					a_{i2}y_{i2}^*+b_{i2}-x_{i}^*+\nu_{i2}^*=0, \qquad  x_i^*\le \bar{\rho}.
				\end{gather*}
			The first equality yields to $x^*_i > b_{i2}$ which contradicts $  x_i^*\le \bar{\rho}$, since $b_{i2}> \bar{\rho}$. Therefore, $y_{i2}^*=0$.
				\item The proof is similar to that of the previous statement.
			\end{enumerate}			
		\end{proof}}
		\begin{proof}[Proof of Lemma~\ref{lem:trivialsol}]
			\hfill
			\begin{enumerate}[label=\Roman*:]
				\item Since $b_i > \bar{\rho}$,  for any feasible $x_i$ such that $\bar{\rho} \ge x_i \ge \ubar{\rho}$, we can write $b_i > \bar{\rho} \ge {x_i} \ge  \ubar{\rho}$.  Therefore, $x_i < b_i$. Then, based on the objective function \eqref{prob1a} and the constraint \eqref{prob1d} , we can conclude that $x_i^* \in [ \ubar{\rho},  \bar{\rho}]$, $y_i^*=0$, $\mu_i^*=b_i-x_i^*$ and  $\nu_i^*=0$.
				\item Since $\ubar{\rho} >a_im_i+b_i $, for any feasible $x_i$ such that $\bar{\rho} \ge x_i \ge \ubar{\rho}$, we can write $ \bar{\rho} \ge {x_i} \ge  \ubar{\rho} >a_im_i+b_i $.  Therefore, $x_i > a_im_i+b_i$. Then, based on  \eqref{prob1d}, we can conclude that $x_i^* = \ubar{\rho}$, $y_i^*=m_i$, $\mu_i^*=0$ and $\nu_i^*=x_i^*-a_im_i-b_i$.
			\end{enumerate}
		\end{proof}		
		\begin{proof}[Proof of Lemma~\ref{lem:convex}]
			We consider two cases. Note that since $m_i>0$, $\mu_i$ and $\nu_i$ cannot be nonzero at the same time.
			\begin{description}
				\item[Case 1)] Suppose, for the sake of contradiction, $x_i^*$, $y_i^*$, $\mu_i^*$ and $\nu_i^*$ be the only optimal solution of the problem \eqref{prob1} and $\mu_i^*>0$ and $\nu_i^*=0$. Since $x_i^*$, $y_i^*$, $\mu_i^*$ and $\nu_i^*$ are an optimal solution and hence are feasible, they should satisfy the constraints of the problem \eqref{prob1}, .i.e. 
				\begin{gather*}
					\ubar{\rho} \le x_i^* \le \bar{\rho}, \\
					y_i^*+\sum_{j \in N \atop j\neq i}y_j^* \le f,\\ 
					y_i^*=0, \ \mu_i^*=b_i-x_i^*, \ \nu_i^*=0
				\end{gather*}
				We define $\bar{x}_i=x_i^*+\mu_i^*=b_i$ and consequently  we have
				\begin{gather*}
					\bar{y_i}=0, \ \bar{\mu}_i=0, \ \bar{\nu}_i=0.
				\end{gather*}
				Since $b_i \le \bar{\rho}$, $\bar{x}_i > x_i^*$ and $\bar{y}_i=y_i^*=0$, we can conclude that $\bar{x}_i$, $\bar{y}_i$, $\bar{\mu}_i$ and $\bar{\nu}_i$ are feasible for \eqref{prob1} and $\phi(x_i^*,y_i^*)= \phi (\bar{x}_i,\bar{y}_i)$ which is a contradiction.
				\item[Case 2)] Suppose, for the sake of contradiction, $x_i^*$, $y_i^*$, $\mu_i^*$ and $\nu_i^*$ be an optimal solution of the problem \eqref{prob1} and $\mu_i^*=0$ and $\nu_i^*>0$. Since $x_i^*$, $y_i^*$, $\mu_i^*$ and $\nu_i^*$ are an optimal solution and hence are feasible, they should satisfy the constraints of the problem \eqref{prob1}, .i.e.,
				\begin{gather*}
					\ubar{\rho} \le x_i^* \le \bar{\rho}, \\
					y_i^*+\sum_{j  \in N \atop j\neq i}y_j^* \le f,\\ 
					y_i^*=m_i, \ \mu_i^*=0, \ \nu_i^*=x_i^*-a_im_i-b_i.
				\end{gather*}
				We define $\bar{x}_i=x_i^*-\nu_i^*=a_im_i+b_i$ and consequently  we have
				\begin{gather*}
					\bar{y_i}=m_i, \ \bar{\mu}_i=0, \ \bar{\nu}_i=0.
				\end{gather*}
				Since $a_im_i+b_i \ge \ubar{\rho}$, $\bar{x}_i < x_i^*$ and $\bar{y}_i=y_i^*=m_i>0$, we can conclude that $\bar{x}_i$, $\bar{y}_i$, $\bar{\mu}_i$ and $\bar{\nu}_i$ are feasible for \eqref{prob1} and  $\phi (\bar{x}_i,\bar{y}_i) < \phi(x_i^*,y_i^*)$ which is  a contradiction.
			\end{description}
			By means of these two cases, we show that there exists an optimal solution $x_i^*$, $y_i^*$, $\mu_i^*=0$ and $\nu_i^*=0$ for all $i\in N$ for the problem \eqref{prob1}. As a result, we can relax $\mu_i$ and $\nu_i$ to zero in \eqref{prob1} for all $i \in N$ and thus the same $x_i^*$ and $y_i^*$ can be found as the solution of \eqref{prob2}.
		\end{proof}		
		\begin{proof}[Proof of Lemma~\ref{lem:bneg}]
			\hfill 
			\begin{enumerate}[label=\Roman*:]
				\item \textit{Problem PP:}  Let $x_i, y_i, \mu_i$ and $\nu_i$ be any feasible solution for \eqref{probpp}. Since $x_i$ is nonnegative and $b_i$ is negative, we have $x_i-b_i>0$  and $\mu_i =0$. If $\nu_i=0$, then $y_i=\frac{x_i-b_i}{a_i}>0$ and if $\nu_i>0$, then $y_i=m_i>0$. Therefore, $y_i$ is positive if $b_i$ is negative.\\
				\textit{Problem UP:} The proof is similar to the previous case.
				\item \textit{Problem PP:} It follows from Lemma~\ref{lem:trivialsol} item I.\\
				\textit{Problem UP:} The proof is similar to the proof of Lemma~\ref{lem:trivialsol} item I.
			\end{enumerate}	
		\end{proof}
		\begin{proof}[Proof of Lemma~\ref{lem:pp}]
			Since $0 \le b_i \le  \bar{\rho}$ for all $i \in N$,	based on Lemma~\ref{lem:convex}, $\mu_i^*=\nu_i^*=0$ for all $i \in N$ in \eqref{probpp}. Suppose, for the sake of contradiction, there exists $j \in N$ such that $y_j^*=0$ and $x_j^*=b_j$. We can leave out the constraints corresponding to the index $j$ from the problem \eqref{probpp}. As a result,  the following optimization problem has the same optimizer:
			\begin{subequations} \nonumber
				\begin{alignat}{2} 
					&  \underset{x,y}{\mathrm{min}} \quad  &&\ (x_j-p) (\frac{x_j-b_j}{a_j})+\sum_{i\in N\atop i\neq j} (x_i-p) (\frac{x_i-b_i}{a_i})+pf \\
					& \mathrm{subject \ to}\quad   &&0 \le x_i \le \bar\rho, \qquad \forall i\in N\setminus\{j\}, \\ 
					&&& \frac{x_i-b_i}{a_i} \ge 0, \qquad \forall i\in N\setminus\{j\}, \\
					&&& \frac{x_i-b_i}{a_i} \le m_i \qquad \forall i\in N\setminus\{j\}, \\
					&&& \sum_{i\in N\atop i\neq j} \frac{x_i-b_i}{a_i} \le f, 
				\end{alignat}
			\end{subequations} 
			where $\frac{x_i-b_i}{a_i} =y_i$.   This optimization problem is convex. Therefore, the optimizer of this problem  satisfies the KKT conditions. We write the KKT conditions of this problem for the index $j$ as follows:
			\begin{equation*}
				x_j^*=\frac{b_j+p}{2}.
			\end{equation*}
			Having $x_j^*=b_j$, we conclude $b_j=p$
			which is a contradiction since $b_i <p$ for all $i \in N$.
		\end{proof}
		\begin{proof}[Proof of Lemma~\ref{lem:up}]
			We define index sets $\gamma_1=\{i \in \gamma \mid \nu_i^*=0\}$ and $\gamma_2=\{i \in \gamma \mid \nu_i^*>0\}$. 
			Based on the sets $\gamma_1$, $\gamma_2$, $\bar{\gamma}$ and the complementary constraints of  the problem \eqref{probup}, the following can be concluded.
			\begin{equation*}
				\begin{aligned}
					&y_i^*>0, \quad &&\mu_i^*=0, \quad &\nu_i^*= 0,\qquad &\forall \ i \in \gamma_1,\\
					&y_i^*=m_i, \quad &&\mu_i^*=0, \quad &\nu_i^*> 0,\qquad &\forall \  i \in \gamma_2,\\
					&y_i^*=0, \quad &&\mu_i^*\ge 0, \quad &\nu_i^*= 0,\qquad &\forall \  i \in \bar{\gamma}
				\end{aligned}
			\end{equation*}
			As a result, the next optimization problem has the same  optimizer as \eqref{probup}. 
			\begin{subequations} \label{probupprime}
				\begin{alignat}{2} 
					&  \underset{x,y,\mu.\nu}{\mathrm{min}} \quad  &&\ (x-p)(\sum_{i\in \gamma_1} y_i+\sum_{i\in \gamma_2} m_i)+pf \label{probupprime0}\\
					& \mathrm{subject \ to}\quad  && y_i=\frac{x-b_i}{a_i} \ge 0 \qquad \forall i\in \gamma_1, \label{probupprime1}\\
					&&& \nu_i=x-a_im_i-b_i \ge 0, \qquad \forall i\in \gamma_2, \label{probupprime2}\\
					&&& \mu_i=b_i-x \ge 0, \qquad \forall i\in \bar{\gamma}, \label{probupprime3}\\
					&&& \sum_{i\in \gamma_1}y_i \le f- \sum_{i\in \gamma_2}m_i \label{probupprime4}
				\end{alignat}
			\end{subequations}
			Let $\lambda_{1i}$, $\lambda_{2i}$, $\lambda_{3i}$ and $\lambda_{4}$  be the dual variables for the constraints \eqref{probupprime1}-\eqref{probupprime4}, respectively. Since this problem is convex, the KKT conditions are necessary and sufficient for the optimizer of this problem. Note that since $y_i^*>0$, $\lambda_{1i}^*=0$ for all $i\in \gamma_1$ and since $v_i^*>0$, $\lambda_{2i}^*=0$ for all $i\in \gamma_2$.
			\begin{gather} \label{KKTpp1}
				x^*=\frac{p}{2}+\frac{\chi}{2}-\omega\\ \label{KKTpp2}
				x^* > b_i, \qquad \forall i \in \gamma_1,\\ \label{KKTpp3}
				x^* > a_im_i+b_i,\qquad \forall i \in \gamma_2,\\\label{KKTpp4}
				b_i \ge x_i \perp \lambda_{3i}^*\ge 0, \qquad \forall i \in \bar{\gamma},\\\label{KKTpp5}
				f- \sum_{i\in \gamma_2}m_i \ge \sum_{i\in \gamma_1}\frac{x-b_i}{a_i} \perp \lambda_{4}^*\ge 0 
			\end{gather}
			where 
			\begin{gather*}\label{eq:chi} 
				\chi=\frac{\displaystyle \sum_{i_1\in \gamma_1} (b_{i_1} \prod_{{i_2}\in \gamma_1 \atop i_2\neq i_1}a_{i_2}) }{\displaystyle \sum_{i_1\in \gamma_1}\prod_{i_2\in \gamma_1 \atop i_2\neq i_1}a_{i_2}},\\ \label{eq:omega}
				\omega=\frac{\displaystyle \prod_{i_1\in\gamma_1} a_{i_1}}{\displaystyle2\sum_{i_1\in \gamma_1}\prod_{i_2\in \gamma_1 \atop i_2\neq i_1}a_{i_2}}(\sum_{i_1 \in \gamma_2}m_{i_1}+\sum_{i_1\in \bar\gamma}\lambda_{3{i_1}}+\lambda_4\sum_{i_1\in \gamma_1}\frac{1}{a_{i_1}}).
			\end{gather*}
			Then, we have the following results.
			\begin{itemize}
				\item Let $j \in \gamma_1$ and $k \in \gamma_1 \setminus \{j\}$. Since $\omega$ is positive, we can conclude  from \eqref{KKTpp1} and \eqref{KKTpp2} 
				\begin{gather*} 
					x^*=\frac{p}{2}+\frac{\chi}{2}-\omega > b_j \quad \Longrightarrow \quad p>2b_j-\chi,\\
					x^*=\frac{p}{2}+\frac{\chi}{2}-\omega > b_k \quad \Longrightarrow \quad p>2b_k-\chi.
				\end{gather*}
				We multiply $p>2b_j-\chi$ by
				\[\sum_{i_1\in \gamma_1} ( \prod_{i_2\in \gamma_1 \atop i_2\neq i_1}a_{i_2}) + \prod_{i_2\in \gamma_1 \atop i_2\neq j}a_{i_2},\]
				and we multiply  $p>2b_k-\chi$ for all $k \in \gamma_1 \setminus \{j\}$ by 
				\[ \prod_{i_2\in \gamma_1 \atop i_2\neq k}a_{i_2}.\] 
				By adding these inequalities together, we have $p > b_j$.
				\item Let $j \in \gamma_2$. Due to \eqref{KKTpp1} and \eqref{KKTpp3}, we have 
				\begin{equation*}
					x^*=\frac{p}{2}+\frac{\chi}{2}-\omega >a_jm_j+b_j>b_j \quad \Longrightarrow \quad \chi+p > 2b_j.
				\end{equation*}
				Since $\chi$ is a weighted average of  the elements of the set $\{b_i \mid i \in \gamma_1 \}$,  there exists $k \in \gamma_1$ such that $b_k \ge \chi$. Also, since $k \in \gamma_1$, we have $p > b_k$.
				Therefore, 
				\begin{equation*}
					\begin{cases}
						\chi+p > 2b_j, \\
						b_k \ge \chi,\\
						p > b_k, 
					\end{cases} \quad \Longrightarrow \quad p > b_j.
				\end{equation*}
			\end{itemize} 
		\end{proof}
	\end{document}